\documentclass[12pt, twoside]{amsart}
\usepackage{a4, amssymb, amsmath, amsthm, latexsym, cite, hyperref}
\usepackage{graphicx, amsfonts, ulem, float}
\usepackage{tikz,tikz-network,pgfplots} 
\usetikzlibrary{shapes, arrows, positioning}
\usepackage[font=small, labelsep=none, figurename=Fig.]{caption}
 \captionsetup[figure]{labelfont={},name={Fig.},labelsep=period}

\newtheorem{theorem}{Theorem}[section]

\newtheorem{definition}[theorem]{Definition}
\newtheorem{example}[theorem]{Example}
\newtheorem{lemma} [theorem]{Lemma}
\newtheorem{notation}[theorem]{Notation}

\newtheorem{remark}[theorem]{Remark}

\voffset=-12mm
\mathsurround=2pt
\parindent=12pt
\parskip= 4.5 pt
\lineskip=3pt
\oddsidemargin=10mm
\evensidemargin=10mm
\topmargin=55pt
\headheight=12pt
\footskip=30pt
\textheight 8.1in
\textwidth=150mm
\raggedbottom
\pagestyle{myheadings}
\hbadness = 10000
\tolerance = 10000

\vspace{5cm}

\begin{document}
\title{\bf Energy and Adjacency Spectra of Semigraphs}
\author{Pralhad M. Shinde$^{1}$, Charusheela Deshpande$^{2}$ }
\address{$^{1,2}$Dept. of Maths, College of Engineering Pune, Maharashtra-411005, India.}
\email{$^1$pralhadmohanshinde@gmail.com, $^2$dcm.maths@coep.ac.in}
\maketitle

\thispagestyle{empty}

\begin{abstract}	
{\footnotesize  In this paper, we study the energy of semigraphs and obtain some bounds, and show that one of the bounds is tight. We also study the spectra of the adjacency matrix of a special type of rooted 3-uniform semigraph and enumerate those explicitly. 
}
 \end{abstract}

{\small \textbf{Keywords:} {\footnotesize Adjacency matrix of semigraph, Energy of semigraph } }

{\small \textbf{AMS classification:}{\footnotesize \;\;05C99; 05C50}}

\vskip 1cm
\section{Introduction}
If $G$ is a graph of order $n$ and $A$ is its adjacency matrix then energy of graph $G$, denoted by $\mathcal{E}(G)$, is the sum of absolute values of its eigenvalues. Energy of graph is extensively studied in literature \cite{gutman}. In \cite{cmd}, authors studied the energy of semigraphs for their adjacency matrix which is not a symmetric matrix. In \cite{pms}, the first author defined the adjacency matrix which enjoys the symmetric property and showed that various spectral graph theory results can be extended to semigraphs. And it draws the inspiration to study the energy of semigraph with the new definition of adjacency matrix. 

This paper is organized as follows. In Section 2, we  study eigenvalues of special type of rooted 3-uniform semigraph.  In Section 3, we define the energy of a semigraph and give lower and upper bounds for the same. Further, we enumerate energies of some special type of semigraphs and compare them with the bounds obtained.

\section*{Preliminaries}
For all basic definitions and standard notations please refer to \cite{pms}, \cite{smt}.\\
We recall some definitions here;
\begin{definition} \label{def:1}
Let $V$ be a non-empty set having $n$ elements. A $semigraph$ is a pair G=$(V, E),$ where the elements of $V$are called vertices and $E$ is a set of ordered $k$-tuples of distinct vertices, whose elements are called edges of $G$; for $n\geq 2,$ satisfying the following conditions:

\begin{enumerate}
	\item  Any two edges have at most one vertex in common
	\item Two edges $(u_1,u_2,\cdots,u_k)$ and $(v_1,v_2,\cdots,v_r)$ are considered to be equal if
	\begin{enumerate}
	\item r = k and
	\item either $u_i = v_i$ for $1\leq i \leq k,$  or $u_i=v_{k-i+1}$, for $1\leq i \leq k.$ 
	\end{enumerate}
\end{enumerate}
\end{definition}

Thus the edge $(u_1,u_2,\cdots,u_r)$ is same as
$(u_r,u_{r-1},\cdots,u_1)$.\\\\
Two vertices $v_i,\; v_j$ in a semigraph are said to be {\bf\itshape adjacent} if they belong to the same edge  and are said to be {\bf\itshape consecutively adjacent} if in addition they are consecutive in order as well. 
\vskip2mm
\noindent For the edge $e=(u_1,u_2,\cdots,u_n)$, $u_1$ and $u_n$ are called the {\bf \it end} vertices of $e$ and $u_2,u_3,\cdots,u_{n-1}$ are called the {\bf \it middle} vertices of $e$. Note that $u_i, \; u_j$ are adjacent for all $1\leq i, j\leq n$ while $u_i, u_{i+1}$ are consecutively adjacent for all $1\leq i\leq n-1$. \\

For a semigraph, we define following types of vertices and edges:
\begin{enumerate}
\item $u_i$ is said to be a pure end vertex if it is an end vertex of every edge to which it belongs. 
\item $u_i$ is said to be a pure middle vertex if it is a middle vertex of every edge to which it belongs. 
\item $u_i$ is said to be a middle end vertex if it is middle vertex of at least one edge and end vertex of at least one other edge.
\item An edge $e=(u_1, u_2, \cdots, u_k),\; k\geq2$ is said to be full edge if $u_1$ and $u_k$ are pure end vertices.
\item An edge $e=(u_1, u_2, \cdots, u_k),\; k>2$ is said to be an half edge if either $u_1$ or $u_k$ (or both) are middle end vertices.
\item An edge $e=(u_1, u_2)$ is said to be a quarter edge if both  $u_1$ and $u_2$ are middle end vertices while $e=(u_1, u_2)$ will be  half edge if exactly one of $u_1$ and $u_2$ is a middle end vertex and other is a pure end vertex. 
\end{enumerate}
 \par For a full edge $e=(u_1, u_2, \cdots, u_k)$, $(u_i, u_{i+1})\; \forall\; 1\leq i\leq k-1$ is called a partial edge of $e$ while for a half edge $e=(u_1, u_2, \cdots, u_{k-1}, u_k)$, $(u_1, u_2)$ is called partial half edge if $u_1$ is middle end vertex,  $(u_{k-1}, u_k)$ is a partial half edge if $u_k$ is middle end vertex and $(u_i, u_{i+1})\; \forall\; 2\leq i\leq k-2$ are partial edges. Thus, any half edge can have at most two partial edges. 
\vskip2mm
\begin{example}
Let $G = (V, E)$ be a semigraph, with $V = \{v_1,v_2,\cdots,v_{10}\}$ as a vertex set and 
$E = \{(v_1,v_2,v_3,v_4,v_5),$ $(v_1,v_7,v_8),$ $(v_2,v_6,v_8),$ $(v_1,v_9),$ $(v_6,v_7)\}.$ as an edge set.
\end{example}

\begin{figure}[h]
\centering
 \begin{tikzpicture}[yscale=0.5]
  \Vertex[x=-1, y=2, size=0.2,  label=$v_9$, position=left, color=black]{I} 
 \Vertex[size=0.2,  label=$v_1$, position=below, color=black]{A} 
  \Vertex[x=3, size=0.2,label=$v_2$, color=none, position=below]{B} 
  \Vertex[x=5,size=0.2,label=$v_3$,position=below, color=none]{C} 
   \Vertex[x=7, size=0.2, label=$v_4$,position=below, color=none]{D}
   \Vertex[x=9, size=0.2, label=$v_5$,position=below, color=black]{J}
  \Vertex[x=3,y=2,size=0.2,label=$v_6$,position=right, color=none]{E}  
  \Vertex[x=3, y=4, size=0.2, label=$v_8$,position=above, color=black]{F}
  \Vertex[x=1.5,y=2,size=0.2,label=$v_7$,position=left, color=none]{G}  
    \Vertex[x=5,y=3,size=0.2,label=$v_{10}$,position=right, color=black]{H}  
  \Edge(A)(B) \Edge(B)(C) \Edge(C)(D) \Edge(A)(G) \Edge(G)(F) \Edge(F)(E)  \Edge(I)(A) \Edge(J)(D)
  \draw[thick](2.9,0.3)--(3.1, 0.3);
  \draw[thick](1.65,2)--(2.85, 2);
  \draw[thick](3, 1.85)--(3, 0.3);
  \draw[thick](2.85,1.75)--(2.85, 2.2);
 \draw[thick](1.65,1.75)--(1.65, 2.2);
\end{tikzpicture}
\caption{}
\label{fig:1}
 \end{figure}

In Fig.~\ref{fig:1}, vertices $v_1,v_5,v_8,$and$v_9$ are the pure end vertices; $v_3, v_4$ are pure middle vertices; $v_2$,$v_6,$and$v_7$ are the middle end vertices and $v_{10}$ is an isolated vertex. Further, $(v_1, v_9)$, $(v_1, v_7, v_8)$, $(v_1, v_2, v_3, v_4, v_5)$ are full edges whereas $(v_2, v_6, v_8)$ is an half edge with only $(v_2, v_6)$ as a partial half edge. Note that $(v_6, v_7)$ is the a quarter edge. 
 \begin{definition}  \label{def:2}
 A semigraph $G=(V, E)$ is said to be connected if for any two vertices $u,\; v\;\in E$, there exist a sequence of edges $e_{i_1},\cdots, e_{i_p}$ for some $p$ such that $u\in e_{i_1},\; v\in e_{i_p}$ and $|e_{i_{j}}\cap e_{i_{j+1}}|=1,\; \forall\; 1\leq j\leq p-1$.
 \end{definition}

\begin{notation}
Throughout this paper, we assume that semigraph is connected and $G=(V, E)$ denotes the semigraph with $n$ vertices and $m$ edges such that
\begin{itemize}
\item $m_1$ is the number of full edges 
\item $m_2$ is the number quarter edges
\item $m_3$ is the number of half edges with one partial half edge
\item $m_4$ is the number of half edges with two partial half edges  
\end{itemize}  
Note that $m=m_1+m_2+m_3+m_4$ and if $G$ is a graph then $m_2=m_3=m_4=0$ and $m=m_1$.
\end{notation}
\vskip2mm

 \section{Adjacency matrix}
  Let $G$=$(V, E)$ be a semigraph, with $V=\{v_1, v_2,\cdots, v_n\}$ as a vertex set and $E=\{e_1, e_2,\cdots, e_m\}$ as an edge set.  
Recall that the graph skeleton~\cite[definition 1.5]{pms} of $G$ is an underlined graph structure $G^{S}$ of the semigraph on $V$, where two vertices $v_{i}$, $v_{j}$ are adjacent in $G^{S}$ iff $v_{i}$ and $v_{j}$ are consecutively adjacent in $G.$   
 Let $u_i, u_j \in e=(u_1, u_2,\cdots, u_k)$ for some $e \in E$. Let $d_{e}(u_i,u_j)$ denote the distance between $u_i$ and $u_j$ in the graph skeleton of $e$. The distance $d_{e}(u_i,u_j)$ is well-defined as each pair of vertices in semigraph belongs to at most one edge. 
  
  \begin{definition}~\cite{pms} \label{def:4}
  We index the rows and columns of a matrix $A=(a_{ij})_{n\times n}$ by vertices $ v_1, v_2,\cdots, v_n,$ where $a_{ij}$ is given as follows:
$$a_{ij}=\begin{cases}
 d_{e}(v_i,v_j),& \text{if $v_i,\; v_{j}$ belong to a full edge or a half edge such that } \\
 &\text{$(v_i, v_j)$ is neither a partial half edge nor a quarter edge}\\
\;\;\; \frac{1}{2}, & \text{if $(v_i,\;  v_{j})$ is a partial half edge}\\
\;\;\;\frac{1}{4}, &\text{if $(v_i,  v_{j})$ is a quarter edge}\\
\;\;\;0,&\text{otherwise}
\end{cases}$$ 
\end{definition}
The matrix $A=(a_{ij})_{n\times n}$ is called the adjacency matrix of semigraph $G.$ 

Let $A_{i}$ be the $i^{th}$ row of the adjacency matrix $A$, we define the degree of vertices in semigraph as $d_i=A_{i}\mathbf{1}$, $\mathbf{1}$ being a column matrix with all entries 1. 
 
 \section{Spectra of rooted 3-uniform semigraph tree}
 In this section, we compute the eigenvalues of the rooted 3-uniform semigraph tree. Let $T^3_n$ denote the semigraph on $2n+1$ vertices with $n$ edges. The edge set $E$ is given by $\{(v_1, v_{2i},v_{2i+1})\; \big{|}\; 1\leq i\leq n \}$
\begin{figure}[h]
\centering
  \begin{tikzpicture}[yscale=0.5]
 \Vertex[size=0.2, label=$v_1$, position=above, color=black]{A} 
  \Vertex[size=0.2, x=1, y=-1,label=$v_{2n}$, position=right, color=none]{B} 
   \Vertex[size=0.2, x=2, y=-2,label=$v_{2n+1}$, position=below, color=black]{C}
   \Vertex[size=0.2, x=-1, y=-1,label=$v_2$, position=left, color=none]{D} 
   \Vertex[size=0.2, x=-2, y=-2,label=$v_3$, position=below, color=black]{E}
  \Vertex[size=0.2, x=0, y=-2,label=$v_{2r}$, position=left, color=none]{F} 
   \Vertex[size=0.2, x=0, y=-4,label=$v_{2r+1}$, position=below, color=black]{G} 
   \Edge(A)(D) \Edge(D)(E) \Edge(A)(F) \Edge(F)(G) \Edge(A)(B) \Edge(B)(C)
 
 \Edge[bend =-20, style={dashed}](E)(G)
  \Edge[bend =-20, style={dashed}](G)(C)
\end{tikzpicture}\\
$T^3_{n} $ 
\caption{}
\label{fig:2}
\end{figure}

The adjacency matrix $A$ of rooted 3-uniform semigraph tree: $T^{3}_{n}$ is
\[
     \bordermatrix{ & {v_1} & {v_2} & {v_3}& {v_4} & {v_5}  & \cdots  & {v_{2n-2}}& {v_{2n-1}} &{v_{2n}}& {v_{2n+1}} \cr
       v_1 & 0&1&2&1&2&\cdots&1&2&1&2 \cr
       v_2 & 1&0&1&0&0&\cdots&0&0&0&0\cr
       v_3 & 2&1&0&0&0&\cdots&0&0&0&0 \cr
        v_4 & 1&0&0&0&1&\cdots&0&0&0&0 \cr
       v_5 & 2&0&0&1&0&\cdots&0&0&0&0 \cr
   \vdots & \vdots&\vdots&\vdots&\vdots&\vdots&\ddots&\vdots&\vdots&\vdots&\vdots\cr
       v_{2n-2} &1&0&0&0&0&\cdots&0&1&0&0 \cr
        v_{2n-1} &2&0&0&0&0&\cdots&1&0&0&0 \cr
       v_{2n} &1&0&0&0&0&\cdots&0&0&0&1 \cr
       v_{2n+1} &2&0&0&0&0&\cdots&0&0&1&0} \qquad
 \]

 \begin{lemma}\label{lemma:1}
 The spectrum of $T^3_{n}$ is: 
 $$\begin{pmatrix}-1&1&\lambda_1& \lambda_2& \lambda_3 \\ n-1&n-1&1&1&1\end{pmatrix}$$ 
 where
   $\lambda_1, \lambda_2, \lambda_3$ are roots of $\lambda^3-(5n+1)\lambda-4n$.
\end{lemma}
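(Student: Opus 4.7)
The plan is to exploit the obvious branch-symmetry of $T_n^3$: the $n$ edges $(v_1, v_{2i}, v_{2i+1})$, $i=1,\dots,n$, are interchangeable, and the partition
$\pi=\{\{v_1\},\,\{v_2,v_4,\dots,v_{2n}\},\,\{v_3,v_5,\dots,v_{2n+1}\}\}$
is equitable for $A$. I would first confirm equitability by reading off the row sums of $A$ restricted to each block: from $v_1$ one sees $n$ entries equal to $1$ in the second class and $n$ entries equal to $2$ in the third; from any $v_{2i}$ one sees a single $1$ (to $v_1$) and a single $1$ (to $v_{2i+1}$); from any $v_{2i+1}$ one sees a single $2$ (to $v_1$) and a single $1$ (to $v_{2i}$). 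This yields the quotient matrix
\[
Q=\begin{pmatrix} 0 & n & 2n\\ 1 & 0 & 1\\ 2 & 1 & 0\end{pmatrix}.
\]

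Next I would compute the characteristic polynomial of $Q$ by cofactor expansion along the top row. A short calculation gives $\det(\lambda I - Q)=\lambda^{3}-(5n+1)\lambda-4n$, so the three roots $\lambda_1,\lambda_2,\lambda_3$ lift to eigenvalues of $A$ via the characteristic vectors of $\pi$ (i.e.\ eigenvectors that are constant on each block).

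For the remaining $2n-2$ eigenvalues, I would look at eigenvectors $x$ that sum to zero on every block; equivalently $x_1=0$, $\sum_i x_{2i}=\sum_i x_{2i+1}=0$. Writing $x_{2i}=a_i$, $x_{2i+1}=b_i$, the eigenvalue equation $Ax=\lambda x$ decouples branch-by-branch into
\[
b_i=\lambda a_i,\qquad a_i=\lambda b_i,
\]
together with the row-$v_1$ constraint $\sum_i(a_i+2b_i)=0$. The first pair forces $\lambda^{2}=1$. For $\lambda=1$, $b_i=a_i$ and the constraint becomes $3\sum_i a_i=0$, an $(n-1)$-dimensional solution space; for $\lambda=-1$, $b_i=-a_i$ and the constraint becomes $-\sum_i a_i=0$, again $(n-1)$-dimensional.

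Finally I would tally multiplicities: $3+(n-1)+(n-1)=2n+1=|V(T_n^3)|$, so the spectrum is completely described. The only step that needs a bit of care is verifying that the ``symmetric'' part (spanned by characteristic vectors of $\pi$) and the ``branch-balanced'' part (vectors summing to zero in each block) together span $\mathbb{R}^{2n+1}$ and are each $A$-invariant; this is standard for equitable partitions and is immediate from the block structure above. No other step is harder than a $3\times3$ determinant.
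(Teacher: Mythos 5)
Your proposal is correct, but it proves the lemma by a genuinely different route than the paper. The paper works directly with the characteristic polynomial $T_n(\lambda)=\det(\lambda I-A)$: repeated cofactor expansion along the last columns yields the recurrence $T_n=(\lambda^2-1)T_{n-1}-5\lambda(\lambda^2-1)^{n-1}-4(\lambda^2-1)^{n-1}$, which, together with the base case $T_1=\lambda^3-6\lambda-4$, telescopes to the closed form $T_n(\lambda)=(\lambda^2-1)^{n-1}\bigl(\lambda^3-(5n+1)\lambda-4n\bigr)$. You instead exploit the branch symmetry via the weighted equitable partition $\{v_1\}\cup\{v_{2i}\}\cup\{v_{2i+1}\}$; I checked your quotient matrix $Q$ and its characteristic polynomial $\lambda^3-(5n+1)\lambda-4n$, as well as the branch-by-branch analysis on the block-sum-zero complement giving $\lambda=\pm1$ each with multiplicity $n-1$, and all of it is right (the row-$v_1$ constraint collapsing to $\sum_i a_i=0$ in both cases is exactly what makes the dimension count $3+(n-1)+(n-1)=2n+1$ close up). The trade-off: the paper's determinant recurrence delivers the full characteristic polynomial with its factorization in one formula and needs no invariant-subspace bookkeeping, while your argument is shorter on computation, produces explicit eigenvectors, and makes the source of the $\pm1$ eigenvalues (the interchangeable branches) conceptually transparent; the only point requiring care, which you correctly flag, is that the equitable-partition machinery applies verbatim to this weighted adjacency matrix because $A$ is symmetric and has constant block row sums.
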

\begin{proof}
Consider the characteristic polynomial $T_{n}(\lambda) =det(\lambda I -A)$

$$T_{n}(\lambda)=\begin{vmatrix}
\lambda&-1&-2&-1&-2&\cdots& -1&-2&-1&-2 \\
-1&\lambda&-1&0&0&\cdots & 0&0&0&0 \\
-2&-1&\lambda&0&0 &\cdots & 0&0&0&0 \\
-1&0&0&\lambda&-1 &\cdots &0&0&0&0 \\
-2&0&0&-1&\lambda & \cdots &0&0&0&0 \\
&&&\vdots&&\vdots&&&& \\
-1&0&0&0&0 & \cdots & \lambda&-1&0&0 \\
-2&0&0&0&0 & \cdots & -1&\lambda&0&0 \\
-1&0&0&0&0&\cdots& 0&0&\lambda&-1 \\
-2&0&0&0&0&\cdots&0&0&-1&\lambda
\end{vmatrix} $$
Using co-factor expansion along the last column, we get
\begin{align*}
T_{n}(\lambda)=&\lambda A_{(2n+1)(2n+1)} +1A_{(2n)(2n+1)} -2A_{1(2n+1)} 
\end{align*}
Further using the cofactor expansions of the above terms along the last columns and later taking the cofactor expansion along the last row if the only non-zero entry is in the first column-last row, we get
\small{
\begin{align*}
T_{n}=&\lambda^2 T_{n-1}-
\lambda (\lambda^2 -1)^{n-1}-T_{n-1}-2(\lambda^2-1)^{n-1}-2(\lambda^2-1)^{n-1}-4\lambda(\lambda^2-1)^{n-1}\\
T_n=&(\lambda^2-1)T_{n-1}-5\lambda(\lambda^2-1)^{n-1}-4(\lambda^2-1)^{n-1}
\end{align*}
 }
Applying the same formula for $T_{n-1}$ we get
$$T_{n-1}=(\lambda^2-1)T_{n-2}-5\lambda(\lambda^2-1)^{n-2}-4(\lambda^2-1)^{n-2}$$
Plugging this to the formula of $T_n$, we get
 $$T_n=(\lambda^2-1)\left[(\lambda^2-1)T_{n-1}-5\lambda(\lambda^2-1)^{n-1}-
4(\lambda^2-1)^{n-1}\right]-5\lambda(\lambda^2-1)^{n-1}-4(\lambda^2-1)^{n-1}$$
Simplifying it gives us
$$T_n=(\lambda^2-1)^2T_{n-2}-5\times 2\;\lambda(\lambda^2-1)^{n-1}-4\times 2\;(\lambda^2-1)^{n-1}$$
Continuing recursive substitution we get 
$$T_n=(\lambda^2-1)^{n-1}T_{1}-5(n-1)\;\lambda(\lambda^2-1)^{n-1}-4(n-1)\;(\lambda^2-1)^{n-1}$$
Here, $T_1$ is a single edge of length 3 and it's characteristics equation is $\lambda^3-6\lambda-4$. 
Therefore, 
$$T_n=(\lambda^2-1)^{n-1}(\lambda^3-6\lambda-4)-5(n-1)\;\lambda(\lambda^2-1)^{n-1}-4(n-1)\;(\lambda^2-1)^{n-1}$$
Further simplifying and re-arranging the terms we get
$$T_n(\lambda)=(\lambda^2-1)^{n-1}\;(\lambda^3-(5n+1)\lambda-4n)$$
Hence, the spectrum of $T^3_{n}$ is: -1 and 1 repeated $n-1$ times and roots $\lambda_1, \lambda_2, \lambda_3$ of $\lambda^3-(5n+1)\lambda-4n$.
\end{proof}
\vskip 1.5cm

\section{Bounds on energy}
Energy of semigraph is defined as the sum of absolute values of its eigenvalues. In \cite{cmd}, authors have calculated energy of semigraphs using their adjacency matrix. We use our definition of adjacency matrix to find the energy and also find some bounds. The bounds obtained here are similar to the energy bounds of graphs and it turns out as generalization of graph bounds. \\
To prove the theorem that follow we need the following definition of cartesian product of semigarphs and result on eigenvalues of the product.
\begin{definition}\cite{smt}
Let $G_1=(V_1, E_1)$, $G_2=(V_2, E_2)$ be two semigraphs with $|V_1|=m,\; |V_2|=n$. The cartesian product $G=G_1\times G_2$ of $G_1$ and $G_2$ is a semigraph on the vertex set $V=\{(u_i, v_j)\;|\;u_i\in V_1,\; v_j\in V_2\}$. The edges are of the form $\left((u_i, v_{i_1}),(u_i, v_{i_r}),\cdots, (u_i, v_{i_r})\right)$ for some edge $(v_{i_1}, v_{i_2},\cdots, v_{i_r})$ of $G_2$ or  $\left((u_{i_1}, v_j),(u_{i_2}, v_j),\cdots, (u_{i_p}, v_{j})\right)$  for some edge $(u_{i_1}, u_{i_2},\cdots, u_{i_p})$ of $G_1$. 
\end{definition}
Note that adjacency matrix of $G_1\times G_2$ is of form $I_m\otimes B+A\otimes I_n$, where $A_{m \times m}$ and $B_{n\times n}$ are adjacency matrices of $G_1$ and $G_2$ respectively. Thus, eigenvalues of $G=G_1\times G_2$ are of the form $\lambda_i+\mu_j$ where $\lambda_i,\; \mu_j$ are eigenvalues of $G_1$ and $G_2$ respectively.

\begin{theorem}
If $\mathcal{E}(G)$ is a rational number then it must be either an even integer or a rational number of the form $\frac{N}{2^r}$, for some $N\in \mathbb{Z}, \; r\in \mathbb{N}$ depending on whether $G$ has no middle end vertices or has middle end vertices.
\end{theorem}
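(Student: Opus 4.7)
The plan is to adapt the classical Bapat--Pati argument (that the energy of a graph is never an odd integer) to the semigraph setting by exploiting integrality of the adjacency matrix, or of a suitable integer multiple of it. The proof proceeds in three short steps.

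First, I would record the basic observation that $\mathrm{tr}(A)=0$: the diagonal entries of $A$ are not covered by any non-``otherwise'' case of Definition~\ref{def:4}, so $a_{ii}=0$ for every $i$, and hence the eigenvalues $\lambda_1,\dots,\lambda_n$ of $A$ sum to zero. Splitting them by sign gives
\[
\sum_{\lambda_i>0}\lambda_i \;=\; -\!\!\sum_{\lambda_i<0}\lambda_i \;=\; \tfrac{1}{2}\mathcal{E}(G),
\]
so the problem reduces to showing that an appropriate integer multiple of $\sum_{\lambda_i>0}\lambda_i$ is an algebraic integer, then invoking the elementary fact that a rational algebraic integer is an integer.

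If $G$ has no middle end vertices, then by Definition~\ref{def:4} every entry of $A$ is a non-negative integer (either $0$ or a skeleton distance $d_e(v_i,v_j)$); in particular there are no $\tfrac14$ or $\tfrac12$ entries. Hence $\det(\lambda I - A)$ is monic with integer coefficients, each $\lambda_i$ is an algebraic integer, and $\mathcal{E}(G)/2$ is an algebraic integer. If it is also rational it is an integer, and we conclude $\mathcal{E}(G)\in 2\mathbb{Z}$. If $G$ does have middle end vertices, the only extra entries of $A$ permitted by Definition~\ref{def:4} are $\tfrac12$ (partial half edges) and $\tfrac14$ (quarter edges), so $4A$ has integer entries. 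Then $4\lambda_1,\dots,4\lambda_n$ are algebraic integers with zero sum, whence $2\mathcal{E}(G)=\sum_{\lambda_i>0}4\lambda_i$ is an algebraic integer. Assuming $\mathcal{E}(G)\in\mathbb{Q}$, we obtain $2\mathcal{E}(G)\in\mathbb{Z}$, so $\mathcal{E}(G)=N/2$, which has the required form $N/2^r$ (with $r=1$).

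The argument is essentially free of computation; the main conceptual hurdle is simply to recognise that (i) the sign-splitting/trace-zero step of Bapat and Pati still goes through verbatim here because Definition~\ref{def:4} assigns $0$ to the diagonal, and (ii) clearing denominators by the single factor $4$ is enough to obtain an integer matrix even in the presence of quarter edges, so a uniform value of $r$ suffices in the second case.
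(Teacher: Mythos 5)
Your proof is correct, and while it opens with the same reduction as the paper (trace zero, split the spectrum by sign, so $\mathcal{E}(G)=2\sum_{\lambda_i>0}\lambda_i$), the key step is genuinely different. The paper realizes $\lambda_1+\cdots+\lambda_k$ as an eigenvalue of the $k$-fold Cartesian product $G\times\cdots\times G$, whose adjacency matrix is a Kronecker sum, and then applies the rational root theorem to the characteristic polynomial of that large matrix (after clearing the powers of $2$ from its coefficients in the middle-end-vertex case). You instead use the ring structure of the algebraic integers: the eigenvalues of the integer matrix $A$ (resp.\ $4A$) are algebraic integers, sums of algebraic integers are algebraic integers, and a rational algebraic integer is an integer. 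Your route is shorter, avoids having to control the denominators of the characteristic polynomial of a product semigraph, and in the second case yields a strictly sharper conclusion: $2\mathcal{E}(G)\in\mathbb{Z}$, i.e.\ the form $N/2^r$ always holds with $r=1$, whereas the paper's argument leaves $r$ unspecified. (The Cartesian-product device in the paper is essentially an elementary substitute for the closure of algebraic integers under addition, so the two mechanisms are doing the same job.) One small point worth making explicit: the diagonal of $A$ vanishes because $d_e(v_i,v_i)=0$ and a single vertex is neither a partial half edge nor a quarter edge, so $\mathrm{tr}(A)=0$ as you assert; and eigenvalues equal to $0$ contribute nothing to either side of the sign split, so the identity $\sum_{\lambda_i>0}\lambda_i=\tfrac12\mathcal{E}(G)$ is unaffected by them.
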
   
\begin{proof}
Let $\lambda_1,\lambda_2,\cdots,\lambda_n$ be eigenvalues of adjacency matrix $A$ of $G$. Trace of $A$ being $0$ implies $\displaystyle \sum_{i=1}^{n}\lambda_i=0$. Hence,$ \displaystyle \sum_{i=1}^{k}\lambda_i=\displaystyle \sum_{i=k+1}^{n}\lambda_i$, where $\lambda_1,\lambda_2,\cdots,\lambda_k$ are non-negative eigenvalues and $\lambda_{k+1},\lambda_{k+2},\cdots,\lambda_n$ are negative eigenvalues.\\
Thus, $\mathcal{E}(G)=2(\lambda_1+\lambda_2+\cdots+\lambda_k)$. Note that $\lambda_1+\lambda_2+\cdots+\lambda_k$ is an eigenvalue of $G\times G\times \cdots \times G$(k-times). We consider two cases based on the presence or absence of middle end vertices. \\
\textbf{Case 1}: If $G$ has no middle end vertices then characteristic polynomial of adjacency matrix is a monic polynomial with integer coefficient, and rational root of such polynomial must be an integer. Hence, $\lambda_1+\lambda_2+\cdots+\lambda_k$ is an integer and  $\mathcal{E}(G)$ is an even integer. \\
\textbf{Case 2}: If $G$ has middle end vertices then adjacency matrix of $G$ contains some entries  as $\frac{1}{2},\,\frac{1}{4}$, so does the adjacency matrix of $G\times G\times \cdots \times G$. Hence, the characteristic polynomial which is monic has coefficients of the form $\frac{p}{2^r}$ for some $p\in \mathbb{Z}, \; r\in \mathbb{N}$. Let ${2^l}$ be the least common multiple of all denominators, multiply the characteristics equation by it to make coefficients integer. Hence, the rational roots are of the form $\frac{N}{2^r}$, for some $N\in \mathbb{Z}, \; r\in \mathbb{N}$. So, if $\lambda_1+\lambda_2+\cdots+\lambda_k$ is a rational number then $\mathcal{E}(G)$ must of the form $\frac{N}{2^r}$, for some $N\in \mathbb{Z}, \; r\in \mathbb{N}$.

\end{proof}
\begin{remark}
When the semigraph $G$ is a graph then we get the graph theory result \cite[Theorem 3.27]{rbp}. 
\end{remark}

\begin{theorem}\label{thm:1}
Let $|V|=n$, $|E|=m$, $r_i$ is the size of the $i^{th}$ edge, then 
$$\displaystyle \mathcal{E}(G)\leq \sqrt{n\left(\frac{1}{6}\sum_{i=1}^{m}r^2_{i}(r^2_i-1)-\frac{15}{8}m_2-\frac{3}{4}m_3-\frac{1}{2}m_4\right)}$$ where $m_2, m_3, m_4$ are defined as earlier. 
\end{theorem}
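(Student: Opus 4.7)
The plan is to adapt McClelland's classical energy estimate to the semigraph adjacency matrix $A$, which by Definition \ref{def:4} is symmetric and therefore has real spectrum. Writing $\lambda_1,\dots,\lambda_n$ for the eigenvalues of $A$, I first apply Cauchy--Schwarz to the vector $(|\lambda_1|,\dots,|\lambda_n|)$ against the all-ones vector of length $n$:
\[
\mathcal{E}(G)^2 = \Bigl(\sum_{i=1}^n |\lambda_i|\cdot 1\Bigr)^{\!2} \le n\sum_{i=1}^n \lambda_i^2 = n\,\mathrm{tr}(A^2) = n\sum_{i,j} a_{ij}^2.
\]
Thus the theorem reduces to showing that $\sum_{i,j} a_{ij}^2$ is bounded by the bracketed expression.

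The key structural fact is the semigraph axiom that any two edges share at most one vertex, so each nonzero entry $a_{ij}$ is determined by a \emph{unique} edge. Consequently $\sum_{i,j} a_{ij}^2$ splits as $\sum_{e \in E} C(e)$, where $C(e)$ depends only on the type of $e$ and its size $r_e$. For a full edge $e=(u_1,\dots,u_k)$ the entries are $a_{u_p u_q} = |p-q|$, so $C(e) = 2\sum_{1\le p<q\le k}(q-p)^2$. Rewriting the inner sum as $\sum_{d=1}^{k-1}(k-d)d^2$ and invoking the standard formulas for $\sum d^2$ and $\sum d^3$ collapses it to the clean identity $\tfrac{k^2(k^2-1)}{12}$, giving $C(e) = \tfrac{1}{6}r_e^2(r_e^2-1)$. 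This is exactly the leading term of the claimed bound.

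The remaining three edge types are handled by the same decomposition but with some entries replaced by fractional values dictated by Definition \ref{def:4}. A quarter edge ($r_e = 2$, entry $\tfrac14$) contributes $2(\tfrac14)^2 = \tfrac18$, which I rewrite as $\tfrac{1}{6}\cdot 2^2(2^2-1) - \tfrac{15}{8}$ to isolate the correction, explaining the $-\tfrac{15}{8}m_2$ term. A half edge differs from the corresponding full edge only at its partial-half-edge pair(s), where the entry becomes $\tfrac12$ rather than $1$; its contribution is therefore $\tfrac{1}{6}r_e^2(r_e^2-1)$ adjusted by a constant that depends on whether one or two partial-half-edge positions are present, and this is what should produce the $-\tfrac{3}{4}m_3$ and $-\tfrac{1}{2}m_4$ corrections after summation.

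The main obstacle is the bookkeeping for the half-edge case: one must match the specific constants $\tfrac{3}{4}$ and $\tfrac{1}{2}$ claimed in the statement, keeping careful track of how the $1 \to \tfrac12$ downgrade at each partial-half-edge position modifies $C(e)$ and possibly bounding (rather than equating) the deviation. Once this accounting is carried out and summed over all $m = m_1+m_2+m_3+m_4$ edges, substitution into the Cauchy--Schwarz estimate from the first paragraph yields the stated inequality.
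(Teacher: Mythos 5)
Your opening move is exactly the paper's: Cauchy--Schwarz gives $\mathcal{E}(G)^2 \le n\sum_i\lambda_i^2 = n\,\mathrm{tr}(A^2)$, and the theorem then rests entirely on identifying $\mathrm{tr}(A^2)$ with the bracketed quantity. The difference is that the paper simply quotes this identity from \cite[Lemma 3.3]{pms}, whereas you set out to re-derive it by splitting $\sum_{i,j}a_{ij}^2$ over edges (legitimate, since two edges share at most one vertex, so no vertex pair is counted twice). Your full-edge computation $2\sum_{d=1}^{k-1}(k-d)d^2=\tfrac16 k^2(k^2-1)$ and your quarter-edge correction $2\bigl(\tfrac14\bigr)^2=\tfrac16\cdot2^2(2^2-1)-\tfrac{15}{8}$ are both correct.

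The gap is precisely the half-edge bookkeeping you defer, and it does not close the way you hope. By Definition \ref{def:4}, a half edge differs from the corresponding full edge only in that each partial half edge contributes $\bigl(\tfrac12\bigr)^2$ instead of $1^2$ at each of its two ordered positions, a deficit of $2\bigl(1-\tfrac14\bigr)=\tfrac32$ per partial half edge. So the exact corrections are $-\tfrac32 m_3-3m_4$, not $-\tfrac34 m_3-\tfrac12 m_4$. (Sanity check: for $e=(u_1,u_2,u_3)$ with one partial half edge the contribution is $2\bigl(\tfrac14+1+4\bigr)=\tfrac{21}{2}=12-\tfrac32$.) You therefore cannot ``match the constants'' by equality; your identity, carried out honestly, disagrees with the one the paper imports from \cite[Lemma 3.3]{pms}. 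For this particular theorem your own fallback rescues you: since $\tfrac32\ge\tfrac34$ and $3\ge\tfrac12$, your (smaller) value of $\mathrm{tr}(A^2)$ is still bounded above by the bracketed expression, so the stated upper bound follows --- but you must present that last step as an inequality, not an identity, and the same discrepancy would be fatal if you ran this route on the lower bound of Theorem \ref{thm:2}, where the sign of the correction matters.
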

\begin{proof}
Consider $$\mathcal{E}(G)^2=\left(\sum_{i=1}^{n}|\lambda_i|\right)^2$$
by Cauchy-Schwarz inequality 
$$\mathcal{E}(G)^2=\left(\sum_{i=1}^{n}|\lambda_i|\right)^2 \leq n\sum_{i=1}^{n}|\lambda_i|^2$$
by \cite[lemma 3.3]{pms}
$$\sum_{i=1}^{n}|\lambda_i|^2=\frac{1}{6}\sum_{i=1}^{m}r^2_{i}(r^2_i-1)-\frac{15}{8}m_2-\frac{3}{4}m_3-\frac{1}{2}m_4$$
Hence, combining these two together we get 
$$\mathcal{E}(G)^2\leq n\left(\frac{1}{6}\sum_{i=1}^{m}r^2_{i}(r^2_i-1)-\frac{15}{8}m_2-\frac{3}{4}m_3-\frac{1}{2}m_4\right)$$
Hence, $$\displaystyle \mathcal{E}(G)\leq \sqrt{n\left(\frac{1}{6}\sum_{i=1}^{m}r^2_{i}(r^2_i-1)-\frac{15}{8}m_2-\frac{3}{4}m_3-\frac{1}{2}m_4\right)}.$$

\end{proof}

\begin{remark}
When the semigraph $G$ is a graph, then $m_2=m_3=m_4=0$ and $r_i =2,\; \forall i$. 
Thus, we get \cite[Theorem 5.1]{gutman}
$$\displaystyle \mathcal{E}(G) \leq \sqrt{n\left(\frac{1}{6}\sum_{i=1}^{m} 2^2(2^2-1\right)}=\sqrt{n\left(\frac{1}{6}12m\right)}=\sqrt{2mn}$$
\end{remark}

\begin{theorem} \label{thm:2}
Let $|V|=n$, $|E|=m$, $r_i$ is the size of the $i^{th}$ edge, then 
$$\displaystyle \mathcal{E}(G)\geq \sqrt{\frac{1}{3}\sum_{i=1}^{m}r^2_{i}(r^2_i-1)-\frac{15}{4}m_2-\frac{3}{2}m_3-m_4}$$
 where $m_2, m_3, m_4$ are defined as earlier.

\end{theorem}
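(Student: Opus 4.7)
The plan is to mimic the classical McClelland-type lower bound for graph energy, adapted to the semigraph setting by invoking the trace-squared identity from \cite[Lemma 3.3]{pms} used in Theorem \ref{thm:1}. The key observation is that, by the definition of the adjacency matrix, all diagonal entries of $A$ vanish, so $\mathrm{tr}(A)=\sum_{i=1}^n\lambda_i=0$, exactly as in the graph case. This is what makes the standard lower-bound argument transfer verbatim.

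First I would write
\begin{equation*}
\mathcal{E}(G)^2=\Bigl(\sum_{i=1}^{n}|\lambda_i|\Bigr)^2=\sum_{i=1}^{n}\lambda_i^{2}+2\sum_{1\le i<j\le n}|\lambda_i||\lambda_j|.
\end{equation*}
Then I would apply the triangle inequality to the cross-term to get
\begin{equation*}
2\sum_{i<j}|\lambda_i||\lambda_j|\;\ge\;2\,\Bigl|\sum_{i<j}\lambda_i\lambda_j\Bigr|.
\end{equation*}
Next, using $\sum_i\lambda_i=0$, the identity $0=(\sum_i\lambda_i)^2=\sum_i\lambda_i^2+2\sum_{i<j}\lambda_i\lambda_j$ gives $\sum_{i<j}\lambda_i\lambda_j=-\tfrac12\sum_i\lambda_i^2$, hence $\bigl|\sum_{i<j}\lambda_i\lambda_j\bigr|=\tfrac12\sum_i\lambda_i^2$. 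Combining these steps yields
\begin{equation*}
\mathcal{E}(G)^2\;\ge\;\sum_{i=1}^n\lambda_i^{2}+\sum_{i=1}^{n}\lambda_i^{2}=2\sum_{i=1}^{n}\lambda_i^{2}.
\end{equation*}

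Finally, I would substitute the already-used trace formula $\sum_i\lambda_i^{2}=\tfrac16\sum_i r_i^{2}(r_i^{2}-1)-\tfrac{15}{8}m_2-\tfrac34 m_3-\tfrac12 m_4$ from \cite[Lemma 3.3]{pms} and take the square root:
\begin{equation*}
\mathcal{E}(G)\;\ge\;\sqrt{\tfrac13\sum_{i=1}^{m}r_i^{2}(r_i^{2}-1)-\tfrac{15}{4}m_2-\tfrac{3}{2}m_3-m_4}.
\end{equation*}

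There is no real obstacle here; the only thing one has to be a little careful about is justifying $\mathrm{tr}(A)=0$, which follows immediately from Definition \ref{def:4} since none of the four cases applies to $i=j$. The rest is the standard ``trace equals zero'' trick combined with the previously established second-moment formula, so the proof should be short and parallels closely the upper-bound argument in Theorem \ref{thm:1}. As an aside, I would also note that specialising $m_2=m_3=m_4=0$ and $r_i=2$ recovers the graph-theoretic lower bound $\mathcal{E}(G)\ge\sqrt{2m}$, which is a nice sanity check to include in a remark following the theorem.
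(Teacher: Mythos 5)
Your proposal is correct and follows essentially the same route as the paper: both use $\mathrm{tr}(A)=0$ to get $\sum_{i<j}\lambda_i\lambda_j=-\tfrac12\sum_i\lambda_i^2$, bound the cross-term $2\sum_{i<j}|\lambda_i\lambda_j|$ from below by $\bigl|2\sum_{i<j}\lambda_i\lambda_j\bigr|$, and then substitute the second-moment formula from \cite[Lemma 3.3]{pms} to conclude $\mathcal{E}(G)^2\ge 2\sum_i\lambda_i^2$. The only cosmetic difference is that you keep the identity in terms of $\sum_i\lambda_i^2$ until the last step, whereas the paper plugs in the lemma's value for $2\sum_{i<j}\lambda_i\lambda_j$ earlier; the logic is identical.
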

\begin{proof}
We know that $trace(A)=0$, implies $\displaystyle \sum_{i=1}^{n}\lambda_i=0.$ Hence $\displaystyle \left(\sum_{i=1}^{n}\lambda_i\right)^2=0.$
Thus, $$\displaystyle \sum_{i=1}^{n}\lambda^2_i+ 2\sum_{i<j}\lambda_i\lambda_j=0$$
by \cite[lemma 3.3]{pms}
$$\displaystyle 2\sum_{i<j}\lambda_i\lambda_j=-\frac{1}{6}\sum_{i=1}^{m}r^2_{i}(r^2_i-1)+\frac{15}{8}m_2+\frac{3}{4}m_3+\frac{1}{2}m_4\;\;\;\;\;\; \cdots (1)$$
As $$\displaystyle \mathcal{E}(G)^2=\sum_{i=1}^{n}|\lambda_i|^2+ 2\sum_{i<j}|\lambda_i\lambda_j|$$
and 
$$\displaystyle  2\sum_{i<j}|\lambda_i\lambda_j|\geq \big{|}2\sum_{i<j}\lambda_i\lambda_j\big{|}$$
implies $$\displaystyle  2\sum_{i<j}|\lambda_i\lambda_j| \geq \frac{1}{6}\sum_{i=1}^{m}r^2_{i}(r^2_i-1)-\frac{15}{8}m_2-\frac{3}{4}m_3-\frac{1}{2}m_4$$
Thus,
$$\displaystyle \mathcal{E}(G)^2\geq \frac{2}{6}\sum_{i=1}^{m}r^2_{i}(r^2_i-1)-\frac{15}{4}m_2-\frac{3}{2}m_3-m_4$$
Hence, 
$$\displaystyle \mathcal{E}(G)\geq \sqrt{\frac{1}{3}\sum_{i=1}^{m}r^2_{i}(r^2_i-1)-\frac{15}{4}m_2-\frac{3}{2}m_3-m_4}.$$
\end{proof}
\begin{remark}
When semigraph $G$ is a graph \cite[Theorem 5.2]{gutman}, we get $$\displaystyle \mathcal{E}(G)\geq 2\sqrt{m}.$$
\end{remark}

\begin{theorem} \label{thm:3}
Let $|V|=n$, $|E|=m$, $r_i$ is the size of the $i^{th}$ edge, then 
$$\displaystyle \mathcal{E}(G)\leq \lambda_1+\sqrt{(n-1)\left(\frac{1}{6}\sum_{i=1}^{m}r^2_{i}(r^2_i-1)-\frac{15}{8}m_2-\frac{3}{4}m_3-\frac{1}{2}m_4-\lambda^2_1\right)}$$ where $m_2, m_3, m_4$ are defined as earlier and $\lambda_1$ is the largest eigenvalue.
\end{theorem}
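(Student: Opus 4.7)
The plan is to prove this bound by the same ``separate out the top eigenvalue and apply Cauchy--Schwarz to the rest'' strategy that gives the Koolen--Moulton bound for graphs. The two inputs I will use are (i) Lemma 3.3 of \cite{pms}, which was already invoked in Theorems \ref{thm:1} and \ref{thm:2} to evaluate $\sum_{i=1}^{n}\lambda_i^2$, and (ii) the fact that $\lambda_1$, the largest eigenvalue of the adjacency matrix $A$, is nonnegative, so that $|\lambda_1|=\lambda_1$. The latter holds because $A$ is symmetric with nonnegative entries, so by Perron--Frobenius its spectral radius is achieved by a nonnegative eigenvalue.

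Order the eigenvalues so that $\lambda_1\geq \lambda_2\geq \cdots\geq \lambda_n$ and split the energy as
\[
\mathcal{E}(G)=|\lambda_1|+\sum_{i=2}^{n}|\lambda_i|=\lambda_1+\sum_{i=2}^{n}|\lambda_i|.
\]
Apply the Cauchy--Schwarz inequality to the $n-1$ terms in the remaining sum:
\[
\sum_{i=2}^{n}|\lambda_i|\;\leq\;\sqrt{(n-1)\sum_{i=2}^{n}\lambda_i^2}.
\]

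Now substitute for $\sum_{i=2}^{n}\lambda_i^2=\sum_{i=1}^{n}\lambda_i^2-\lambda_1^2$ using Lemma 3.3 of \cite{pms}, exactly as in the proofs of Theorems \ref{thm:1} and \ref{thm:2}, which gives
\[
\sum_{i=2}^{n}\lambda_i^2=\frac{1}{6}\sum_{i=1}^{m}r_i^2(r_i^2-1)-\frac{15}{8}m_2-\frac{3}{4}m_3-\frac{1}{2}m_4-\lambda_1^2.
\]
Plugging this into the Cauchy--Schwarz estimate and adding $\lambda_1$ yields the claimed inequality.

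The proof is essentially mechanical once the two ingredients are in place, so there is no serious obstacle. The only point that needs a brief justification is $|\lambda_1|=\lambda_1$: the semigraph adjacency matrix has entries in $\{0,\tfrac14,\tfrac12,1,2,\ldots\}$, hence is entrywise nonnegative, so Perron--Frobenius applies and $\lambda_1\geq 0$. (If one wishes to avoid even this, one may simply replace $\lambda_1$ by $|\lambda_1|$ throughout; the resulting inequality is equivalent for the spectral radius.) A natural remark to append is that when $G$ is a graph, the bound reduces to the Koolen--Moulton inequality $\mathcal{E}(G)\leq \lambda_1+\sqrt{(n-1)(2m-\lambda_1^2)}$, consistently with the specializations observed after Theorems \ref{thm:1} and \ref{thm:2}.
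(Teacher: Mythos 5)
Your proposal is correct and follows essentially the same route as the paper: split off $\lambda_1$, apply Cauchy--Schwarz to $\sum_{i=2}^{n}|\lambda_i|$, and substitute $\sum_{i=1}^{n}\lambda_i^2$ from Lemma 3.3 of \cite{pms}. Your explicit Perron--Frobenius justification that $|\lambda_1|=\lambda_1$ is a small point the paper leaves implicit, but it does not change the argument.
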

\begin{proof}
By definition,
$$\displaystyle \mathcal{E}(G)=\sum_{i=1}^{n}|\lambda_i|$$ 
by Cauchy Schwarz inequality 
$$\displaystyle  \sum_{i=2}^{n}|\lambda_i| \leq \sqrt{(n-1)\sum_{i=2}^{n}|\lambda_i|^2}\;\;\;\;\ \cdots (2)$$
by \cite[lemma 3.3]{pms}
$$\sum_{i=1}^{n}|\lambda_i|^2=\frac{1}{6}\sum_{i=1}^{m}r^2_{i}(r^2_i-1)-\frac{15}{8}m_2-\frac{3}{4}m_3-\frac{1}{2}m_4$$
implies 
$$\sum_{i=2}^{n}|\lambda_i|^2=\frac{1}{6}\sum_{i=1}^{m}r^2_{i}(r^2_i-1)-\frac{15}{8}m_2-\frac{3}{4}m_3-\frac{1}{2}m_4-\lambda^2_1$$
Thus, by inequality (2)
$$\displaystyle  \sum_{i=2}^{n}|\lambda_i| \leq \sqrt{(n-1)\left(\frac{1}{6}\sum_{i=1}^{m}r^2_{i}(r^2_i-1)-\frac{15}{8}m_2-\frac{3}{4}m_3-\frac{1}{2}m_4-\lambda^2_1\right)}$$
Hence, 
$$\displaystyle \mathcal{E}(G)\leq \lambda_1+\sqrt{(n-1)\left(\frac{1}{6}\sum_{i=1}^{m}r^2_{i}(r^2_i-1)-\frac{15}{8}m_2-\frac{3}{4}m_3-\frac{1}{2}m_4-\lambda^2_1\right)}$$
\end{proof}
\begin{remark}
When the semigraph $G$ is a graph then we get \cite[Theorem 5.3]{gutman}, $$\displaystyle \mathcal{E}(G)\leq \lambda_1+\sqrt{(n-1)\left(2m-\lambda^2_1\right)}$$
\end{remark}

\subsection{Energies of some special semigraphs}\label{sec:4}
We list down the energies of families of a few semigraphs.
 Let $S^3_{2, n}$ denote a star semigraph having one edge of  3 vertices and n edges of 2 vertices and $S^3_{n}$ represent 3-uniform star semigraph on $2n+1$ vertices and $n$ edges. 
\begin{figure}[h]
\centering
  \begin{tikzpicture}[yscale=0.5]
 \Vertex[size=0.2, y=3, label=$v_2$, position=above, color=black]{B} 
 \Vertex[size=0.2,  label=$v_1$, position=180, color=none]{A} 
 \Vertex[size=0.2,y=-3,  label=$v_3$, position=below, color=black]{C} 
 \Edge(A)(B) \Edge(A)(C)
  \Vertex[size=0.2,x=1.1, y=2.5,  label=$v_4$, position=45, color=black]{D} 
    \Vertex[size=0.2,x=1.3, y=-3, label=$v_r$, position=right, color=black]{E} 
  \Vertex[size=0.2,x=-1.6, y=2.2, label=$v_{n+3}$, position=left, color=black]{F} 
  \Vertex[size=0.2,x=-1.2, y=-3.15, label=$v_{r+1}$, position=left, color=black]{G} 
  \draw[thick](0.2,0.2)--(1.1,2.45);
    \draw[thick](0.15,-0.3)--(1.15,-2.7);
    \draw[thick](-0.2,0.3)--(-1.5, 2.1);
    \draw[thick](-0.2,-0.3)--(-1.2,-3.1);
    \draw[thick](0.2, 0)--(0.2, 0.5);
   \draw[thick](0.15, -0.15)--(0.15, -0.6);
    \draw[thick](-0.2, 0)--(-0.2, 0.5);
   \draw[thick](-0.2, -0.1)--(-0.15, -0.6);
  
  \Edge[bend =20, style={dashed}](D)(E)
   \Edge[bend =20, style={dashed}](G)(F)
   
 \Vertex[size=0.2,x=6, y=3, label=$v_1$, position=above, color=black]{B} 
 \Vertex[size=0.2, x=6, label=$v_0$, position=left, color=none]{A} 
 \Vertex[size=0.2, x=6,y=-3,  label=$v_2$, position=below, color=black]{C} 
 \Edge(A)(B) \Edge(A)(C)
  \Vertex[size=0.2,x=7.1, y=2.5,  label=$v_3$, position=45, color=black]{D} 
    \Vertex[size=0.2,x=7.3, y=-3, label=$v_{2n-1}$, position=right, color=black]{E} 
  \Vertex[size=0.2,x=4.4, y=2.2, label=$v_{2n}$, position=left, color=black]{F} 
  \Vertex[size=0.2,x=4.8, y=-3.15, label=$v_{4}$, position=left, color=black]{G} 
  \Edge(A)(D) \Edge(A)(E) \Edge(A)(F) \Edge(A)(G)
  
  \Edge[bend =20, style={dashed}](D)(E)
   \Edge[bend =20, style={dashed}](G)(F)

\end{tikzpicture}\\
$S^3_{2,n} \hspace{5.5cm} S^3_{n} $
\caption{ }
\label{fig:3}
\end{figure}
We recall the eigenvalues of $S^3_{2,n} $ and $S^3_{n}$. 
\begin{lemma}\cite[Lemma 4.1]{pms}
The spectra of star semigraph $S^3_{2,3}$ are:
$$\begin{pmatrix}0&-2&\lambda_1& \lambda_2& \lambda_3 \\ n-1&1&1&1&1\end{pmatrix}$$ 
 where $\lambda_{1}, \lambda_{2},\lambda_{3}$ are roots of the cubic polynomial $\lambda^3 -2\lambda^2-\frac{n+8}{4}\;+\frac{n}{2}.$
\end{lemma}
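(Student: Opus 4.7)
The plan is to exploit the strong symmetry of $S^{3}_{2,n}$ to decompose $\mathbb{R}^{n+3}$ into $A$-invariant subspaces, reducing the eigenvalue problem to a low-dimensional calculation. First I would write the adjacency matrix $A$ explicitly using the ordering $v_{1}, v_{2}, v_{3}, v_{4}, \ldots, v_{n+3}$. The 3-edge $(v_{2}, v_{1}, v_{3})$ contributes $a_{12} = a_{13} = 1$ and $a_{23} = 2$. Since $v_{1}$ is a middle-end vertex (middle of the 3-edge, end of each 2-edge) while each $v_{i}$ with $i \geq 4$ is a pure-end vertex belonging only to $(v_{1},v_{i})$, every such 2-edge is a half edge whose unique pair is a partial half edge, so by Definition \ref{def:4} we have $a_{1i} = \tfrac{1}{2}$ for $4 \le i \le n+3$. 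All other entries vanish; in particular the $n\times n$ block indexed by $\{v_{4},\ldots,v_{n+3}\}$ is zero.

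Next I would read off eigenvectors from the combinatorial automorphisms. Any vector supported on $\{v_{4},\ldots,v_{n+3}\}$ whose coordinates sum to zero is annihilated by $A$, producing an $(n-1)$-dimensional kernel contribution to the spectrum. The transposition $v_{2} \leftrightarrow v_{3}$ is an involutive symmetry, so the antisymmetric vector $(0,1,-1,0,\ldots,0)^{T}$ is an eigenvector; a direct check gives $A(0,1,-1,0,\ldots,0)^{T} = (0,-2,2,0,\ldots,0)^{T}$, so $-2$ is an eigenvalue of multiplicity $1$.

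The remaining $3$-dimensional invariant subspace is spanned by the fully symmetric vectors of the form $x_{1}\,e_{1} + z(e_{2}+e_{3}) + y(e_{4}+\cdots+e_{n+3})$. Writing $A\mathbf{v} = \lambda \mathbf{v}$ on this subspace yields
\begin{align*}
2z + \tfrac{n}{2}\,y &= \lambda\,x_{1},\\
x_{1} + 2z &= \lambda\,z,\\
\tfrac{1}{2}\,x_{1} &= \lambda\,y.
\end{align*}
Using the second and third equations to eliminate $x_{1} = (\lambda - 2)z$ and $y = (\lambda-2)z/(2\lambda)$ and substituting into the first, then multiplying through by $4\lambda$, gives $4\lambda^{3} - 8\lambda^{2} - (n+8)\lambda + 2n = 0$, i.e.\ the claimed cubic $\lambda^{3} - 2\lambda^{2} - \tfrac{n+8}{4}\lambda + \tfrac{n}{2} = 0$, whose three roots $\lambda_{1}, \lambda_{2}, \lambda_{3}$ supply the last three eigenvalues.

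Since the three invariant subspaces have dimensions $n-1$, $1$, and $3$ summing to $n+3$ and are visibly linearly independent, they exhaust $\mathbb{R}^{n+3}$ and give the complete spectrum. I do not anticipate a real obstacle: the only subtle point is correctly reading the half $2$-edge entries as $\tfrac{1}{2}$ (rather than $\tfrac{1}{4}$ or $1$) from Definition \ref{def:4}; once that is settled, the argument reduces to two one-line eigenvector observations plus the $3\times 3$ reduction above.
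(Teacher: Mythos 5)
Your argument is correct: the entries $a_{12}=a_{13}=1$, $a_{23}=2$, $a_{1i}=\tfrac12$ for $i\ge 4$ are exactly what Definition~\ref{def:4} gives (the $3$-edge is full, each $2$-edge is a half edge whose single pair is a partial half edge since $v_1$ is a middle-end vertex), and the three invariant subspaces you exhibit have dimensions $n-1$, $1$, $3$, intersect trivially, and span $\mathbb{R}^{n+3}$. The $3\times3$ reduction does yield $\lambda^3-2\lambda^2-\tfrac{n+8}{4}\lambda+\tfrac{n}{2}$; in fact your computation confirms that the statement as printed contains two typos (the cubic is missing the factor $\lambda$ in the term $-\tfrac{n+8}{4}$, and the subscript should read $S^3_{2,n}$, not $S^3_{2,3}$).

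There is, however, no in-paper proof to compare against: the lemma is quoted from \cite{pms} without proof. Judging from the one spectral computation this paper does carry out in full (Lemma~\ref{lemma:1} for $T^3_n$), the authors' method of choice is a cofactor expansion of $\det(\lambda I-A)$ followed by a recursion in $n$, and the proof in \cite{pms} is presumably of the same determinantal flavour. Your route through symmetry-adapted invariant subspaces (equivalently, the equitable partition $\{v_1\}\cup\{v_2,v_3\}\cup\{v_4,\dots,v_{n+3}\}$ refined by the $v_2\leftrightarrow v_3$ involution) is genuinely different and arguably cleaner: it produces the eigenvalues $0$ and $-2$ with their multiplicities by inspection rather than as factors extracted from a determinant, and it isolates the $n$-dependence in a single $3\times3$ characteristic polynomial. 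One cosmetic caveat: your elimination divides by $\lambda$ and by $z$, which silently assumes $\lambda\neq 0$ and $z\neq 0$; to be airtight you should instead state that the cubic is the characteristic polynomial of the $3\times3$ matrix
$\left(\begin{smallmatrix}0&2&n/2\\ 1&2&0\\ 1/2&0&0\end{smallmatrix}\right)$
representing $A$ on the symmetric subspace in the basis $e_1,\; e_2+e_3,\; e_4+\cdots+e_{n+3}$, which gives the same answer without case analysis.
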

\begin{lemma} \cite[Lemma 4.2]{pms}
The spectra of star semigraph $S^3_n$ are:
$$\begin{pmatrix}-2&2& 1-\sqrt{2n+1}& 1+\sqrt{2n+1} \\ n&n-1&1&1\end{pmatrix}$$ 
 
\end{lemma}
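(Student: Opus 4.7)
I begin by writing down the adjacency matrix explicitly. Since each edge of $S^3_n$ has the form $(v_{2i-1}, v_0, v_{2i})$ with $v_0$ as the (pure) middle vertex and every other $v_j$ a pure end vertex, every edge is a full edge. So the only nonzero entries of $A$ come from $d_e$-values: for each $i = 1, \dots, n$, the pairs $(v_0, v_{2i-1})$ and $(v_0, v_{2i})$ are consecutively adjacent giving entries $1$, while the pair $(v_{2i-1}, v_{2i})$ lies in the same edge at skeleton distance $2$ giving entry $2$. Ordering the vertices as $v_0, v_1, v_2, \dots, v_{2n-1}, v_{2n}$, the matrix takes the block form
$$A=\begin{pmatrix} 0 & \mathbf{1}^T \\ \mathbf{1} & B \end{pmatrix}, \qquad B = I_n \otimes \begin{pmatrix} 0 & 2 \\ 2 & 0 \end{pmatrix},$$
where $\mathbf{1}\in\mathbb{R}^{2n}$ is the all-ones vector.

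The plan is to exhibit the eigenvectors directly by exploiting this block structure. First, for each $i$ consider the vector $w_i$ with $+1$ in position $2i-1$, $-1$ in position $2i$, and zeros elsewhere. Then $\mathbf{1}^T w_i = 0$, so the first row contributes nothing, while the $i$-th $2\times 2$ block of $B$ acts on $(1,-1)^T$ with eigenvalue $-2$. This produces $n$ linearly independent eigenvectors for the eigenvalue $-2$.

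Next, for each $i$ let $u_i$ be the vector with $+1$ in positions $2i-1$ and $2i$ and zeros elsewhere. For any scalars $c_1,\dots,c_n$ with $\sum c_i = 0$, the combination $v=\sum c_i u_i$ satisfies $\mathbf{1}^T v = 2\sum c_i = 0$ (so the $v_0$-coordinate of $Av$ is zero), and on each edge-pair $B$ acts as multiplication by $2$ on $(1,1)^T$. Hence $Av = 2v$, yielding an $(n-1)$-dimensional eigenspace for eigenvalue $2$.

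Finally, the orthogonal complement of the $2n-1$ eigenvectors already found, within the subspace of vectors respecting the symmetry of the $n$ edge-pairs, is $2$-dimensional and consists of vectors of the shape $(x_0, y, y, \dots, y)^T$. Restricting $A$ to this invariant subspace gives the $2\times 2$ system
$$\lambda x_0 = 2n y, \qquad \lambda y = x_0 + 2y,$$
which reduces to $\lambda^2 - 2\lambda - 2n = 0$ with roots $1 \pm \sqrt{2n+1}$. A dimension count ($n+(n-1)+1+1 = 2n+1$) confirms we have accounted for the entire spectrum.

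The only place that requires care is the identification of the invariant subspaces; the construction above makes this transparent, but one must verify both the dimension count and that the three spaces are linearly independent (which is immediate since they lie in distinct eigenspaces of either $B$ or span the remaining orthogonal complement). No individual step is hard, so I would expect the main risk to be a bookkeeping slip in reading off the $d_e$-entries of $A$ from Definition~\ref{def:4}, particularly remembering that $(v_{2i-1},v_{2i})$ contributes $2$ rather than $\tfrac12$ or $\tfrac14$, since neither endpoint is a middle end vertex.
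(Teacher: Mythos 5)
Your proof is correct: the block form $A=\left(\begin{smallmatrix}0&\mathbf{1}^T\\ \mathbf{1}&B\end{smallmatrix}\right)$ with $B=I_n\otimes\left(\begin{smallmatrix}0&2\\2&0\end{smallmatrix}\right)$ is the right reading of Definition~\ref{def:4} (every edge of $S^3_n$ is full, so $(v_{2i-1},v_{2i})$ really does get the entry $d_e=2$), the three families of vectors you exhibit are genuine eigenvectors with the stated multiplicities, the quadratic $\lambda^2-2\lambda-2n=0$ has roots $1\pm\sqrt{2n+1}$, and the dimension count $n+(n-1)+2=2n+1$ closes the argument; one can also sanity-check against $\sum\lambda_i=0$ and $\sum\lambda_i^2=12n$. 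Note, however, that this paper does not prove the lemma at all — it simply quotes it from the reference \cite{pms} — and the one spectral computation the paper does carry out in full (Lemma~\ref{lemma:1}, for $T^3_n$) proceeds by cofactor expansion of $\det(\lambda I-A)$ and a recursion on $n$ to factor the characteristic polynomial as $(\lambda^2-1)^{n-1}$ times a cubic. Your route is genuinely different: rather than computing the characteristic polynomial, you diagonalize by hand using the symmetry of the star, splitting $\mathbb{R}^{2n+1}$ into the antisymmetric per-edge vectors (eigenvalue $-2$), the mean-zero symmetric combinations (eigenvalue $2$), and the $2$-dimensional invariant complement carrying the irrational pair. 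This buys transparency — the multiplicities and the source of each eigenvalue are visible, and the reduction to a $2\times2$ block is immediate — whereas the determinant recursion is more mechanical and generalizes more readily to structures (like $T^3_n$) where no such clean equitable partition presents itself. The only point needing the care you already flag is that the orthogonal complement of your first $2n-1$ eigenvectors is $A$-invariant because $A$ is symmetric; with that observed, the argument is complete.
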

Here, we enumerate the eigenvalues of $S^3_{2,n},\; S^3_n, T^3_n\;$ for small values of $n$. All values in the table are rounded to two decimal places.
\begin{table}[h] 
  \begin{center}
  \setlength{\tabcolsep}{10pt} 
\renewcommand{\arraystretch}{2}
    \begin{tabular}{|c|c|c|c|r|} 
    \hline
    n&1&2&3&4\\
      \hline
     $ \mathcal{E}(S^3_{2,n})$ &5.89 &6.20&6.46&6.69\\
       \hline
      $\mathcal{E}(S^3_n)$ &5.46 &10.47 &15.29&\color{blue}{ 20}\\
       \hline
      $\mathcal{E}(T^3_n)$ &5.46  &9.27&12.66& 15.85\\
       \hline
    \end{tabular}
  \end{center}
  \caption*{Energy table}
\end{table}\\

The bound in theorem \ref{thm:3} is tight as it is attained by $S^3_n$ when $n=4$. In $S^3_n$, we have $m_2=0=m_3=m_4$ and when $n=4$ we have $m=4$ edges and 9 vertices. Also, $r_i=4,\; \forall 1\leq i\leq 4$ and largest eigenvalue $\lambda_1$ is 4. Hence, putting these values in \ref{thm:3} we get 
\begin{align*}
\displaystyle \mathcal{E}(S^3_n)\leq& \;4+\sqrt{(9-1)\left(\frac{1}{6}\sum_{i=1}^{4}3^2(3^2-1)-4^2\right)}
\;\;=\; 20
\end{align*}
Thus, energy table confirms that $S^3_n$ attains the bound in theorem \ref{thm:3}. 
\section*{Conclusion}
There is ample scope for study in this topic further. One could study different types of energies associated with the semigarph on parallel lines with graph and see that the results of graph theory are special cases of the results obtained. Also, section \ref{sec:4} opens up an interesting question to study the family of semigraphs which attains the bounds in theorems \ref{thm:1}, \ref{thm:2},  \ref{thm:3}. 

\bibliographystyle{amsplain}

\end{document}